\numberwithin{equation}{section}
\newcommand*{\avint}{\mathop{\ooalign{$\int$\cr$-$}}}
\newcommand{\ib}{\int_{B_r(y)}}
\newcommand{\mnp}{(m\cdot\nabla p) }
\newcommand{\ot}{\Omega_T }
\newcommand{\RN}{\mathbb{R}^N}
\newcommand{\prt}{p_{y,r}(t) }
\newtheorem{theorem}{Theorem}[section]
\newtheorem*{main}{Main Theorem}
\newtheorem{lemma}[theorem]{Lemma}
\newtheorem{proposition}{Proposition}[section]
\theoremstyle{definition}
\newtheorem{definition}[theorem]{Definition}
\newtheorem{remark}{Remark}
\newcommand{\ep}{\varepsilon}
\newcommand{\eps}[1]{{#1}_{\varepsilon}}
\title[a biological network formulation model
] %Use the shortened version of the full title
      {Regularity theorems for a biological network formulation model in two space dimensions}
\author[Xiangsheng Xu]{}
\subjclass{Primary: 35D30, 35Q99; Secondary: 35A01.}
 \keywords{$A_q$-weights, biological network formulation, bounded mean oscillation (BMO), weakly monotone functions. {\it Kinet. Relat. Models}, to appear. }
 \email{xxu@math.msstate.edu}
\begin{document}
\maketitle

% Enter the first author's name and address:
\centerline{\scshape Xiangsheng Xu}
\medskip
{\footnotesize
% please put the address of the first author
 \centerline{Department of Mathematics \& Statistics}
   \centerline{Mississippi State University}
   \centerline{ Mississippi State, MS 39762, USA}
} % Do not forget to end the {\footnotesize by the sign }

\bigskip

% The name of the associate editor will be entered by an editorial staff
% "Communicated by the associate editor name" is not needed for special issue.
% \centerline{(Communicated by the associate editor name)}

%The abstract of your paper
\begin{abstract}
We present several regularity results for a biological network formulation model originally introduced
by D. Cai and D. Hu \cite{HC}. A consequence of these result is that a stationary weak solution must be a classical one in two space dimensions. Our mathematical analysis is based upon the weakly monotone function theory and Hardy space methods. 
\end{abstract}
%The title of your section 1

	\section{Introduction}
	Let $\Omega$ be a bounded domain in $\mathbb{R}^N$ and $T$ a positive number. Set $\ot=\Omega\times(0,T)$. We study the behavior of weak solutions of the system
	\begin{align}
	-\mbox{{div}}\left[(I+m\otimes m)\nabla p\right]&=S(x)\ \ \ \mbox{in $\ot$,}\label{e1}\\
	\partial_tm-D^2\Delta m-E^2\mnp\nabla p+|m|^{2(\gamma-1)}m&=0\ \ \ \mbox{in $\ot$,}\label{e2}
	\end{align}
	coupled with the initial boundary conditions
	\begin{align}
	p(x,t)=0, &\ \  m(x,t)=0, \ \ \ (x,t)\in \Sigma_T\equiv\partial\Omega\times(0,T),\label{e3}\\
	m(x,0)&=m_0(x),\ \ \ \ x\in\Omega\label{e4}
	\end{align} 
	for given function $S(x)$ and physical parameters $D, E, \gamma$ with properties:
	\begin{enumerate}
		\item[(H1)] $S(x)\in L^{q_0}(\Omega), \ q_0>\frac{N}{2}$; and
		\item[(H2)] $D, E\in (0, \infty), \gamma\in (\frac{1}{2}, \infty)$.
	\end{enumerate}
	This system was originally derived in (\cite{H}, \cite{HC}) as the formal gradient flow of the continuous version of a cost functional describing formation of biological transportation networks on discrete graphs. In this context, the scalar function $p=p(x,t)$ is the pressure due to Darcy's law, while the vector-valued function $m=m(x,t)$ is the conductance vector. The function $S(x)$ is the time-independent source term. Equation \eqref{e1} can be interpreted as Kirchhoff's law for the flux $u\equiv-(I+m\otimes m)\nabla p$,  and the cost is proportional to $|(u\cdot\nabla p)|+c|m|^{2\gamma}$, where $c$ is a constant\cite{HMPS}.
	%has been proposed by Hu and Cai (\cite{H}, \cite{HC}) to describe 
	%natural network formulation. Then 
	Values of the parameters $D, E$, and $\gamma$ are
	determined by the particular physical applications one has in mind. To give an example, we have $\gamma =\frac{1}{2}$ for blood vessel systems. We would like to refer the reader to \cite{HMPS} for more discussions
	in this regard.
	% corresponds to leaf venation \cite{H}. 
	
	In general nonlinear problems do not possess classical solutions. A suitable notion of a weak solution must be obtained for \eqref{e1}-\eqref{e4}. It turns out \cite{HMP} that we can introduce the following definition.

	%	\noindent {\bf Definition.} 
	\begin{definition}
		A pair $(m, p)$ is said to be a weak solution if:
		\begin{enumerate}
			\item[(D1)] $m\in L^\infty\left(0,T; \left(W^{1,2}_0(\Omega)\cap L^{2\gamma}(\Omega)\right)^N\right),\ \partial_tm\in L^2\left(0,T; \left(L^2(\Omega)\right)^N\right),\  p\in L^\infty(0,T; W^{1,2}_0(\Omega)), \ m\cdot\nabla p \in L^\infty(0,T;  L^{2}(\Omega))$;
			\item[(D2)] $m(x,0)=m_0$ in $C\left([0,T]; \left(L^2(\Omega)\right)^N\right)$;
			\item[(D3)] Equations \eqref{e1} and \eqref{e2} are satisfied in the sense of distributions.
		\end{enumerate}
	\end{definition}
	A result in \cite{HMP} asserts that \eqref{e1}
	-\eqref{e4} has a weak solution provided that, in addition to assuming $S(x)\in L^2(\Omega)$ and (H2), we also have
	\begin{enumerate}
		\item[(H3)] $m_0\in\left( W^{1,2}_0(\Omega)\cap L^{2\gamma}(\Omega)\right)^N$.
	\end{enumerate} 
	Finite time extinction or break-down of solutions in the spatially one-dimensional setting for certain ranges of the relaxation exponent $\gamma$ was carefully studied in \cite{HMPS}. Further modeling analysis and numerical results can be found in \cite{AAFM}. We also mention that the question of existence in the case where $\gamma=\frac{1}{2}$ is addressed in \cite{HMPS}. In this case the term $|m|^{2(\gamma-1)}m$ is not continuous at $m=0$. %In this case the term  $|m|^{2(\gamma-1)}m$ 
	It must be replaced by the following function %$g$ 
	$$g(x,t)=\left\{\begin{array}{ll}
	|m|^{2(\gamma-1)}m & \mbox{if $m\ne 0$,}\\
	\in [-1,1]^N &\mbox{if $m\ne 0$.}
	\end{array}\right.$$
	%The existence of weak solutions of this initial boundary value problem was proved by Haskovec, Markowich, and Perthame \cite{HMP}. 
	However, the general regularity theory remains fundamentally incomplete. In particular, it is not known whether or not weak solutions develop singularities in high space dimensions. Recently, Jian-Guo Liu and the author \cite{LX} obtained a partial regularity theorem for \eqref{e1}-\eqref{e4}. It states that the parabolic Hausdorff dimension of the set of singular points can not exceed $N$, provided that $N\leq 3$.
	
	In this paper, we continue to study the regularity properties of weak solutions. We focus our attention on the case where $N=2$. Our main result is:
	\begin{main} Let  (H1) and (H2) be satisfied, and
		let $(m, p)$ be a stationary weak solution to \eqref{e1}-\eqref{e3}, i.e., the functions $m$ and $p$  are independent of time. Assume that the space dimension $N$ is $2$.
		Then $(m, p) $ is locally a  classical solution, provided that $S(x)$ is locally H\"{o}lder continuous.
	\end{main}
	
	If we apply the proof of the partial regularity theorem in \cite{LX} to the situation considered here, we can only conclude that the set of singular points is countable. Even though our estimates are interior ones, we do not foresee any major difficulty in extending our results to
	the boundary. We encourage the interested reader to try that. 
	%The title of your section 2
	
	We begin by studying the time-dependent problem. A key observation is that the function $p$ can be decomposed into two pieces in a small neighborhood: The first piece is weakly monotone \cite{M}. A result in \cite{M} asserts that a weakly monotone function in $W^{1,N}_{\mbox{loc}}(\Omega)$ is locally continuous. The proof of this result becomes applicable to our case if $N=2$. The second piece is bounded due to a result in \cite{LX}. The combination of two is enough to yield the local continuity of $p$ in the space variables. This result is then used to prove that $ |m|^\beta\in L^2(0,T; W^{1,2}_{\mbox{loc}}(\Omega)),\ |m|^\beta|\nabla p|^2\in L^1(0,T; L^1_{\mbox{loc}}(\Omega))$ for each $\beta>0$. Thus in the two-dimensional stationary case, 
	%The second observation is that 
	$|m|^\beta$ can be viewed as a
	BMO function for each $\beta>0$
	% in the space variables
	(see Section 2 for definition and other relevant information). This combined with a result in \cite{JN} asserts that $(|m|^2+1)^\sigma$ is an $A_q$-weight for each $ q>1, \sigma>0$. Equipped with this, we are able to establish 
	%This leads to the establishment 
	that $\mnp^2, |\nabla p|^2$ both lie in 
	the local Hardy space, 
	%for each $t$,  in the time-independent case
	from whence follows the local continuity of $m$.

	%We would like to remark that it would have seemed natural for us to try to apply a weak, weighted version of  here. Unfortunately, it does not work. The reason is that 
	To describe the mathematical difficulty involved in our problem, first notice the term $\mnp \nabla p$ in \eqref{e2}, which represents a cubic nonlinearity. Currently, there is little work done on this type of nonlinearities. Second, the elliptic coefficients in \eqref{e1}
	satisfy
	$$|\xi|^2\leq \langle (I+m\otimes m)\xi,\xi\rangle\leq (1+|m|^2)|\xi|^2\ \ \ \mbox{for all $\xi\in \mathbb{R}^N$,}$$
	where $\langle \cdot,\cdot\rangle$ denotes the inner product in $\mathbb{R}^N$.
	%The mathematical difficulty of the problem is due to the fact that 
	%That is to say, 
	Since $m$ is not bounded a priori,
	%That is, 
	the largest eigenvalue $\lambda_l$ and the smallest eigenvalue $\lambda_s$ of the coefficient matrix may not satisfy 
	$$\lambda_l\leq c\lambda_s.$$
	Here and in what follows the letter $c$ denotes a generic positive number.
	%is not bounded by a constant times the smallest one.  %$1+|m|^2$, while the smallest one $\lambda_s$  is $1$. As %far as we know, existing regularity theory  only deals with the case where there is a positive number $c$ such
	%\begin{equation}\label{elc}
	%\lambda_l\leq c\lambda_s
	%\end{equation}
	%and $\lambda_s$ is an $A_2$ weight. 
	%these two eigenvalues belong to similar classes of weights. 
	%See, e.g., \cite{HKM}. 
	Thus existing results for degenerate and/or singular elliptic equations such as these in \cite{HKM} are not applicable. 
	%In our case, \ref{elc} is what we wish to establish. 
	A condition in \cite{CW} seems to 
	be satisfied by our elliptic coefficients, but the results there cannot be used to improve the regularity of the terms $|\nabla p|^2,\ \mnp^2$, neither are we able to employ a weighted
	version of
	%this regard, the connection between $1$ and $1+|m|^2$ can not be established, which 
	%This explains why we have not been able to employ
	%in order for
	% As a result, 
	%well known weighted inequalities such as weighted Sobolev's inequality and 
	Gehring's lemma \cite{K} in our analysis. Instead, we are motivated by an idea from \cite{CLMSN}. See Proposition \ref{pro3} for details.
	%to play any useful roles.
	
	%They do not match well in weighted inequalities. This basically rules
	%out the possibility of applying 
	
	The rest of the paper is organized as follows. In Section 2, we collect some relevant results about maximal functions, BMO functions, Hardy spaces, and $A_q$-weights. Various regularity results are presented in Section 3. Our main theorem is a consequence of these results. 
	\section{Preliminary results} In this section, we review some relevant results about maximal functions, BMO functions, Hardy spaces, and $A_q$ weights.
	
	Consider the Hardy-Littlewood maximal function $Mf$ of a given measurable function $f$, which is defined by
	\begin{equation}\label{md1}
	Mf(y)=\sup_{r>0}\avint_{B_r(y)} |f|dx,
	\end{equation}
	%$$$
	where $\avint_{B_r(y)} |f|dx=\frac{1}{|B_r(y)|}\ib |f(x)|dx$, $B_r(y)$ denotes the ball in $\mathbb{R}^N$ with center at $x$ and radius $r$, and $|B_r(y)|$ is its Lebesgue measure. Also, if no confusion arises, we always suppress the dependence of a function on its independent variables. We refer the reader to \cite{S1} for the full story of maximal functions. Here we only
	mention the well known inequality
	\begin{equation}
	\|Mf\|_q\leq c(q)\|f\|_q\ \ \ \mbox{when $q>1$,}\label{md}
	\end{equation}
	where $\|\cdot\|_q$ is the norm in $L^q(\mathbb{R}^N)$. 
	%Throughout this paper the letter $c$ denotes a generic positive constant whose meaning can clearly be understood from the context. 
	Note that \eqref{md} fails when $q=1$.
	
	Define a class $\mathcal{T}$ of normalized test functions on $\mathbb{R}^N$ by
	$$\mathcal{T}=\{\phi\in C^\infty(\mathbb{R}^N): \mbox{supp}\ \phi\subset B_1(0)\ \mbox{and $\|\nabla\phi\|_\infty\leq 1$}\}.$$
	Define the ``grand maximal function'' $f^*$ of a distribution on $\mathbb{R}^N$ by
	\begin{eqnarray}
	f^*(y)&=&\sup_{r>0}\sup_{\phi\in\mathcal{T}}\left|\int\frac{1}{r^N}\phi(\frac{y-x}{r})f(x)dx\right|\nonumber\\
	&=&\sup_{r>0}\sup_{\phi\in\mathcal{T}}\left|\phi_r*f\right|.
	\end{eqnarray}
	Here we write $\phi_r$ for the function $r^{-N}\phi(r^{-1}y)$. Note the similarity between this and \eqref{md1}. In particular, $f^*\leq cMf$, and $Mf\leq cf^*$ if $f\geq 0$. A distribution $f$ in $\mathbb{R}^N$ is said to lie in the Hardy space $\mathcal{H}^1(\mathbb{R}^N)$ if $f^*\in L^1(\mathbb{R}^N)$, and the Hardy space norm is
	defined by
	$$\|f\|_{\mathcal{H}^1}=\|f^*\|_1.$$
	There is an alternative definition to this that is equivalent and simpler. Specifically, if $\phi$ is any $C^\infty$ function on $\mathbb{R}^N$ with compact support and $\int_{\mathbb{R}^N}\phi dx>0$, then $f$ lies in
	$\mathcal{H}^1(\mathbb{R}^N)$ if and only if 
	$$\sup_{r>0}\left|\phi_r*f\right|\in L^1(\mathbb{R}^N).$$
	For the purpose of applications to boundary value problems for PDE, we need a local version of the Hardy space.
	\begin{definition}
		Let $\Omega$ be an open set in $\mathbb{R}^N$. We say that a distribution $f$ on $\Omega$ lies in $\mathcal{H}^1_{\mbox{loc}}(\Omega)$ if for each compact set $K_0\subset\Omega$ there is an $\varepsilon_0>0$ so that
		$$\int_{K_0}\sup_{0<r<\varepsilon_0}\sup_{\phi\in\mathcal{T}}|\phi_r*f|dx<\infty.$$
	\end{definition}
	\begin{lemma}
		Suppose $f\in L^1_{\mbox{loc}}(\Omega)$. Then $f\in \mathcal{H}^1_{\mbox{loc}}(\Omega)$ if $|f|\ln(2+|f|)\in L^1_{\mbox{loc}}(\Omega)$, and the converse is true when $f\geq 0$.
	\end{lemma}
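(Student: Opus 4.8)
The plan is to prove the two implications separately, both by reducing to the known global theory of the local Hardy space $\mathcal{H}^1$ via a cutoff argument. For the forward direction, suppose $f\in L^1_{\mbox{loc}}(\Omega)$ with $|f|\ln(2+|f|)\in L^1_{\mbox{loc}}(\Omega)$, and fix a compact set $K_0\subset\Omega$. Choose a smaller neighborhood: pick $\varepsilon_0>0$ so that the closed $3\varepsilon_0$-neighborhood $K_1$ of $K_0$ is still a compact subset of $\Omega$, and fix $\eta\in C_c^\infty(\Omega)$ with $\eta\equiv 1$ on $K_1$ and $0\le\eta\le 1$. The point of the cutoff is that for $y\in K_0$ and $0<r<\varepsilon_0$, the test function $\phi_r(y-\cdot)$ is supported in the ball $B_r(y)\subset K_1$, where $\eta\equiv 1$, so $\phi_r*f=\phi_r*(\eta f)$ on $K_0$ for that range of $r$. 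Hence for $x\in K_0$,
\[
\sup_{0<r<\varepsilon_0}\sup_{\phi\in\mathcal{T}}|\phi_r*f|(x)\le (\eta f)^*(x),
\]
so it suffices to show $(\eta f)^*\in L^1(K_0)$, and in fact we will get $(\eta f)^*\in L^1(\mathbb{R}^N)$, i.e. $\eta f\in\mathcal{H}^1(\mathbb{R}^N)$. This is exactly the classical Stein result (the $L\log L$ local integrability criterion for membership in the real Hardy space): a compactly supported function $g$ with $|g|\ln(2+|g|)\in L^1(\mathbb{R}^N)$ lies in $\mathcal{H}^1(\mathbb{R}^N)$. Applying it to $g=\eta f$ — note $|\eta f|\ln(2+|\eta f|)\le |f|\ln(2+|f|)\chi_{K_1}\in L^1$ — finishes this direction.

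For the converse, assume $f\ge 0$ and $f\in\mathcal{H}^1_{\mbox{loc}}(\Omega)$; fix $K_0\subset\Omega$ compact and let $\varepsilon_0$ be as in the definition, so that $\int_{K_0}\sup_{0<r<\varepsilon_0}\sup_{\phi\in\mathcal{T}}|\phi_r*f|\,dx<\infty$. Since $f\ge 0$, the grand maximal function controls the Hardy--Littlewood maximal function from below: choosing a single fixed nonnegative $\phi_0\in\mathcal{T}$ with $\phi_0\ge c_0>0$ on $B_{1/2}(0)$, we get for $x\in K_0$ and $0<r<\varepsilon_0$,
\[
\phi_0{}_{,r}*f(x)\ \ge\ \frac{c_0}{r^N}\int_{B_{r/2}(x)}f\,dy\ =\ c_0\,2^{-N}\avint_{B_{r/2}(x)}f\,dy ,
\]
hence $\sup_{0<r<\varepsilon_0}\sup_{\phi\in\mathcal{T}}|\phi_r*f|(x)\ge c\,M_{\varepsilon_0}f(x)$, where $M_{\varepsilon_0}$ is the truncated (radii $<\varepsilon_0$) Hardy--Littlewood maximal function. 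Therefore $M_{\varepsilon_0}f\in L^1(K_0)$. Now invoke the classical converse to the maximal-function $L\log L$ theorem: if the Hardy--Littlewood maximal function of a nonnegative $f$ is integrable over a set $E$, then $|f|\ln(2+|f|)\in L^1(E')$ for any $E'$ with $\overline{E'}\subset E^\circ$ — more precisely, from the weak-$(1,1)$ lower bound for $Mf$ one recovers $\int_{E'}f\ln(2+f)\le c\int_{E}Mf + c$. Shrinking $K_0$ slightly (or rather: the statement is local, so this gives $|f|\ln(2+|f|)\in L^1_{\mbox{loc}}(\Omega)$) completes the proof.

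The step I expect to be the genuine content, as opposed to bookkeeping, is the converse direction — specifically the implication ``$Mf\in L^1_{\mbox{loc}}\Rightarrow f\ln(2+f)\in L^1_{\mbox{loc}}$'' for nonnegative $f$. This is the Stein $L\log L$ theorem and its converse; the forward half ($f\log f\in L^1$ near a point forces $\mathcal H^1$ membership after cutoff) is the easier and more standard packaging. Everything else — the cutoff reduction from $\mathcal H^1_{\mbox{loc}}$ to $\mathcal H^1$, and the domination of $Mf$ by $f^*$ when $f\ge 0$ which is already noted in the text — is routine. A clean alternative to citing the classical theorems directly is to note that $\mathcal{H}^1_{\mbox{loc}}$, $L\log L_{\mbox{loc}}$, and $\{f\ge 0:\ Mf\in L^1_{\mbox{loc}}\}$ all coincide by the same localization plus Stein's equivalence, but the cutoff route above is the most self-contained.
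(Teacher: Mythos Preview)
The paper does not prove this lemma; it simply writes ``This lemma can be found in \cite{S1}'' (Semmes's primer). Your write-up therefore supplies strictly more than the paper does, and the route you take---cut off to a compactly supported function, invoke Stein's $L\log L$ characterization of local integrability of the Hardy--Littlewood maximal function, and use the two-sided comparison $f^*\sim Mf$ for $f\ge 0$ already noted in the text---is exactly the standard argument behind the citation.

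One point to fix: your claim that $\eta f\in\mathcal{H}^1(\mathbb{R}^N)$ is false in general. Every element of $\mathcal{H}^1(\mathbb{R}^N)$ has mean zero; for compactly supported $g$ with $\int g\ne 0$ one has $g^*(x)\sim |x|^{-N}$ at infinity, which is not in $L^1(\mathbb{R}^N)$. What Zygmund/Stein actually yields is that $|g|\ln(2+|g|)\in L^1$ for compactly supported $g$ implies $Mg\in L^1$ on every bounded set (equivalently $g$ lies in Goldberg's \emph{local} Hardy space $h^1$). Since you had already isolated $(\eta f)^*\le cM(\eta f)\in L^1(K_0)$ as the sufficient condition, the forward direction goes through once the global $\mathcal{H}^1$ assertion is dropped. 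The converse direction is fine as written; Stein's reverse weak-type inequality uses only small radii, so the truncated maximal function $M_{\varepsilon_0}$ is enough.
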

	This lemma can be found in \cite{S1}.
	
	The definition of $BMO$ is that $f\in BMO$ if
	$$\sup_{B_r(y)}\avint_{B_r(y)}|f-f_{y,r}|dx\equiv\|f\|_*<\infty,$$
	where $f_{y,r}=\avint_{B_r(y)} fdx$. Of course, bounded functions are in $BMO$ and $\ln|x|$ is an unbounded function in $BMO$. A closely related subject is the one of $A_q$-weights. A locally integrable, non-negative function $w$ on $\mathbb{R}^N$ is said to lie in $A_q$, where $q\in (1,\infty) $, if
	% which are defined by the condition
	$$A_q(w)\equiv\sup_{B_r(y)}\left(\avint_{B_r(y)} w dx\left(\avint_{B_r(y)} w^{-\frac{1}{q-1}}dx\right)^{q-1}\right)<\infty.$$
	It turns out that the inequality
	$$\int_{\mathbb{R}^N}[Mf(x)]^q w(x)dx\leq c\int_{\mathbb{R}^N}|f(x)|^q w(x)dx$$
	holds for each $f\in L^q(\mathbb{R}^N)$ if and only if $w\in A_q$. Also of interest are $A_1=\{w:Mw(x)\leq cw(x)\ \mbox{for some constant $c$ and all $x\in \RN$}\}$ and $A_\infty=\cup_{q>1}A_q$. We have the set inclusions
	$$A_1\subseteq A_{q_1}\subseteq A_{q_2}\subseteq A_{\infty},$$
	where $1\leq q_1\leq q_2\leq \infty$.
	The following result is contained in \cite{JN}.
	\begin{lemma} \label{jnl}Set
		$$BMO_*=\{w: w\geq 0, w, \frac{1}{w}\in BMO\}.$$
		Then $BMO_*\subset \cap_{q>1}A_q$.
		% We have:
		%	\begin{enumerate}
		%		\item[(R1)] if $u\in BMO$ and $\alpha>0$, then $u^2+\alpha\in$;
		%		\item[(R2)] both $w$ and $\frac{1}{w}$ lie in $\cap_{q>1}A_q$ if and only if $\ln w$ is in the closure
		%		of $L^\infty(\mathbb{R}^N))$ in $BMO$.
		%	\end{enumerate}
	\end{lemma}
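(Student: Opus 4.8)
The plan is to reduce the whole statement to the case $q=2$, that is, to the single inequality
$$\sup_{B_r(y)}\left(\avint_{B_r(y)}w\,dx\right)\left(\avint_{B_r(y)}w^{-1}\,dx\right)<\infty ,$$
and then to bootstrap from there. The only external input needed is the John--Nirenberg inequality and its standard corollary: for $f\in BMO(\RN)$, every ball $B=B_r(y)$, and every $t>0$ one has $|\{x\in B:|f-f_{y,r}|>t\}|\le c_1|B|e^{-c_2t/\|f\|_*}$ with $c_1,c_2$ depending only on $N$, and consequently $\avint_B|f-f_{y,r}|^p\,dx\le c(p,N)\|f\|_*^p$ for every finite $p$. (Note $w>0$ a.e., since $w^{-1}\in BMO\subset L^1_{\mathrm{loc}}$.) Throughout set $\beta=\max\{\|w\|_*,\|w^{-1}\|_*\}<\infty$.

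For the core estimate fix $B=B_r(y)$ and write $m=\avint_B w\,dx$, $n=\avint_B w^{-1}\,dx$. If $m\le K\beta$ and $n\le K\beta$ for a constant $K$ to be chosen, then trivially $mn\le K^2\beta^2$. The remaining case is $m>K\beta$ (the case $n>K\beta$ being identical after the symmetry $w\leftrightarrow w^{-1}$, which fixes $\beta$). I would split the integral defining $n$: on $\{w\ge m/2\}$ one has $w^{-1}\le 2/m$, so that part is $\le 2/m$; on $\{w<m/2\}$ the John--Nirenberg inequality bounds the relative measure by $c_1e^{-c_2m/(2\beta)}$, and H\"older's inequality combined with the $L^2$ oscillation bound for $w^{-1}$ — the point where the hypothesis $w^{-1}\in BMO$ is used — makes that part $\le C(\beta+n)e^{-c_2m/(4\beta)}$. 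Choosing $K$ large enough that $Ce^{-c_2K/4}\le\tfrac12$ (and $K\ge 4/c_2$), the $n$ on the right is absorbed, leaving $n\le \tfrac{4}{m}+2C\beta e^{-c_2m/(4\beta)}$; multiplying by $m$ and using $m>K\beta$ together with the boundedness of $t\mapsto te^{-c_2t/4}$ on $(K,\infty)$ gives $mn\le 4+2C\beta^2K<\infty$. Hence $w\in A_2$.

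To pass to general $q$: for $q\ge 2$, Jensen's inequality for the concave map $t\mapsto t^{1/(q-1)}$ gives $\avint_B w^{-1/(q-1)}\,dx\le\big(\avint_B w^{-1}\,dx\big)^{1/(q-1)}$, so the $A_q$-functional of $w$ on $B$ is dominated by $\big(\avint_B w\big)\big(\avint_B w^{-1}\big)$, already controlled. For $1<q<2$, writing $s=1/(q-1)>1$, I would repeat the splitting with $v=w^{-1}$: the part over $\{w\ge m/2\}$ contributes $(2/m)^s$, while over $\{w<m/2\}$, H\"older plus the $L^{2s}$ oscillation bound for $v$ and the exponential measure estimate give $\avint_B v^s\,dx\le (2/m)^s+C(\beta^s+n^s)e^{-c_2m/(4\beta)}$; taking the $(q-1)$-th power, multiplying by $m=\avint_B w$, and inserting the already established bound $mn\le C(w)$ and $\sup_{t>0}te^{-ct}<\infty$ shows the $A_q$-functional is finite, with a constant depending on $q$ (as it must, since the bound degenerates as $q\to 1^+$). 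Therefore $w\in\bigcap_{q>1}A_q$.

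The main obstacle is the case $m\gg\beta$ in the proof of $w\in A_2$: one must show that $\avint_B w^{-1}$ is of order $1/m$, i.e.\ that $w^{-1}$ is comparable to $1/m$ on most of $B$. This is exactly where both halves of the hypothesis are indispensable — the exponential smallness of $\{w<m/2\}\cap B$ comes from $w\in BMO$, while controlling $\int_{\{w<m/2\}\cap B}w^{-1}\,dx$ over that thin set requires $w^{-1}\in BMO$ — and one has to verify that the exponential gain in $m/\beta$ overwhelms the polynomial factor $m$, so that both the absorption step and the final product $mn$ stay bounded.
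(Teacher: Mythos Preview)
The paper does not prove this lemma; it simply records the statement and attributes it to Johnson and Neugebauer \cite{JN}. So there is nothing in the paper to compare your argument against --- you are supplying what the paper outsources.

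Your proof is correct. The $A_2$ step is the heart of the matter, and your splitting of $B$ into $\{w\ge m/2\}$ (where $w^{-1}\le 2/m$) and $\{w<m/2\}$ (exponentially small in measure by John--Nirenberg for $w$, with $\avint_B w^{-2}$ controlled by John--Nirenberg for $w^{-1}$), followed by absorption of the $n$-term when $m>K\beta$, is exactly the natural mechanism and is, as far as I know, the route taken in \cite{JN} as well. The passage to $q\ge 2$ is just the inclusion $A_2\subset A_q$. Your sketch for $1<q<2$ --- redoing the splitting with the higher $L^{2s}$ moments of $w^{-1}$ supplied by John--Nirenberg and feeding in the already-established bound $mn\le C(w)$ --- also works, though you have compressed the easy sub-case $m\le K\beta$, $n\le K\beta$, where one simply uses $\avint_B w^{-s}\le C(s)(\beta^s+n^s)$ directly without any splitting. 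One cosmetic slip: the final constant ``$4+2C\beta^2K$'' in the $A_2$ step should read $4+2C\beta^2\sup_{t>0}te^{-c_2t/4}$; the bound does not depend on $K$.
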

	
	This lemma has played a key role in the proof of our main result.
	
	It is well known that there are many useful results for $L^q$ spaces when $1<q<\infty$ that fail when $q=1$ or $q=\infty$. This happens, for example, when one is faced with the equation $\Delta u=g$ and one wants to have $L^q$ estimates for $\nabla^2u$ in terms of $g$. Hardy spaces provide alternatives to $L^q$ when $q=1$ for which there are counterparts to the familiar estimates for $1<q<\infty$. In general $BMO$ is the right substitute for $L^\infty$. We refer the reader to \cite{S1} for more detailed information in this regard. Here
	we only cite the following result from \cite{S1}.
	\begin{lemma}\label{conl}
		Suppose that $\Omega$ is an open set in $\mathbb{R}^N$, and that $u\in L^1_{\mbox{loc}}	(\Omega)$ satisfies
		$\Delta u\in \mathcal{H}^1_{\mbox{loc}}(\Omega)$. Then $u$ is locally a continuous function on $\Omega$ when
		$N=2$.
	\end{lemma}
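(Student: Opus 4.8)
The plan is to localize the problem and represent $u$ through the logarithmic (Newtonian) potential of $\Delta u$, then to exploit two endpoint facts from harmonic analysis that are tuned precisely to the Hardy space $\mathcal{H}^1$ and to the dimension $N=2$. Fix an arbitrary $x_0\in\Omega$ and a ball $B_{2\rho}(x_0)\subset\subset\Omega$; it suffices to produce a continuous representative of $u$ on $B_\rho(x_0)$. I would choose $\eta\in C_c^\infty(B_{2\rho}(x_0))$ with $\eta\equiv 1$ on $\overline{B_\rho(x_0)}$, together with a bump $\psi\in C_c^\infty(B_{2\rho}(x_0)\setminus\overline{B_\rho(x_0)})$ normalized so that $\int\psi\,dx=1$, and then set $g=\eta\,\Delta u-c\,\psi$ with $c=\int\eta\,\Delta u\,dx$. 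Multiplication by a smooth cutoff preserves the local Hardy space, while the correction $c\psi$ forces $\int g\,dx=0$; together these place the compactly supported distribution $g$ in the genuine space $\mathcal{H}^1(\mathbb{R}^2)$. By construction $g=\Delta u$ on $B_\rho(x_0)$.

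Next I would let $\Phi(x)=\frac{1}{2\pi}\ln|x|$ be the fundamental solution of $\Delta$ in $\mathbb{R}^2$ and set $w=\Phi*g$, so that $\Delta w=g$ in the sense of distributions. Since $\Phi,\nabla\Phi\in L^1_{\mathrm{loc}}(\mathbb{R}^2)$ and $g\in L^1$ has compact support, both $w$ and $\nabla w$ lie in $L^1_{\mathrm{loc}}$. For the second derivatives one has $\partial_i\partial_j w=T_{ij}g+\tfrac12\delta_{ij}g$, where $T_{ij}$ is convolution with the principal-value kernel $\partial_i\partial_j\Phi$, which is homogeneous of degree $-2$, smooth off the origin, and has mean zero on circles, hence a Calderón--Zygmund kernel. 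The decisive endpoint fact is that such operators are bounded from $\mathcal{H}^1(\mathbb{R}^2)$ into $L^1(\mathbb{R}^2)$; therefore $\partial_i\partial_j w\in L^1(\mathbb{R}^2)$ for all $i,j$, and consequently $w\in W^{2,1}_{\mathrm{loc}}(\mathbb{R}^2)$.

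The second endpoint input is the borderline Sobolev embedding $W^{2,1}(\mathbb{R}^2)\hookrightarrow C^0$, valid in the critical case $N\cdot 1=N$ with exponent one; for a compactly supported function it follows from the identity $u(x_1,x_2)=\int_{-\infty}^{x_1}\!\int_{-\infty}^{x_2}\partial_1\partial_2 u$, which yields $\|u\|_\infty\le\|\partial_1\partial_2 u\|_{L^1}$, and then by approximation from smooth functions. Cutting $w$ off by a further smooth factor and invoking this embedding shows that $w$ admits a continuous representative on $B_\rho(x_0)$. Finally, on $B_\rho(x_0)$ one has $\Delta(u-w)=\Delta u-g=0$, so $u-w$ is harmonic and hence $C^\infty$ there; therefore $u=w+(u-w)$ is continuous on $B_\rho(x_0)$. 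Since $x_0$ was arbitrary, $u$ is locally continuous on $\Omega$.

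The main obstacle is the pair of endpoint estimates together with the recognition that $\mathcal{H}^1$ and $N=2$ are exactly what make them succeed: for merely $g\in L^1$ the singular integrals $T_{ij}$ fail to map into $L^1$, and for $N>2$ the embedding $W^{2,1}(\mathbb{R}^N)\hookrightarrow C^0$ breaks down (one only reaches $W^{N,1}$). Thus the argument is rigid---it is the interplay of the Hardy-space cancellation (which upgrades $L^1$ to $\mathcal{H}^1$ and restores $L^1$-boundedness of Calderón--Zygmund operators) with the two-dimensional critical Sobolev embedding that produces continuity. A secondary technical point, which I would dispatch by the standard local-Hardy-space calculus, is the verification that the cutoff-and-moment correction genuinely places $g$ in $\mathcal{H}^1(\mathbb{R}^2)$ as defined via the grand maximal function: the small-scale control comes from $\Delta u\in\mathcal{H}^1_{\mathrm{loc}}$, while the vanishing moment together with compact support controls the grand maximal function at infinity.
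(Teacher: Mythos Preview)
Your argument is correct. The paper itself does not prove this lemma; it merely cites the result from Semmes' primer \cite{S1}. The route you take---localize and moment-correct $\Delta u$ to obtain a compactly supported $g\in\mathcal{H}^1(\mathbb{R}^2)$, form the Newtonian potential $w=\Phi*g$, use the $\mathcal{H}^1\to L^1$ endpoint boundedness of the Calder\'on--Zygmund operators $T_{ij}$ to put $w\in W^{2,1}_{\mathrm{loc}}(\mathbb{R}^2)$, apply the critical embedding $W^{2,1}(\mathbb{R}^2)\hookrightarrow C^0$, and finish with Weyl's lemma for $u-w$---is precisely the standard proof, and it is the one sketched in the cited reference. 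Your diagnosis of why both hypotheses are sharp (Calder\'on--Zygmund fails on $L^1$ without the Hardy-space cancellation; $W^{2,1}\hookrightarrow C^0$ fails for $N>2$) is exactly right and explains why the paper cannot extend the main theorem to the time-dependent or higher-dimensional case by this method. The only point worth a remark is the passage from $\mathcal{H}^1_{\mathrm{loc}}(\Omega)$ (defined in the paper via the truncated grand maximal function) to $g\in\mathcal{H}^1(\mathbb{R}^2)$, which you address correctly: the small scales are handled by the hypothesis, and the vanishing moment plus compact support give the $O(r^{-3})$ decay needed to control the large-scale maximal function.
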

	
	\section{Main results} In this section, we first develop a couple of regularity results for \eqref{e1}-\eqref{e4} in two space dimensions.  Our main theorem is then established a consequence of these results. 
	
	The reason our first two propositions in this section are proved for the time-dependent case is
	%\eqref{e1}-\eqref{e4}
	not only for the purpose of generality but also due to a private communication to us by P. Markowich
	stating that numerical experiments for \eqref{e1}-\eqref{e4} in the generality considered here show no signs of singular behavior in solutions.  This suggests possible existence of a classical solution to the problem. Unfortunately,
	the method we have  developed here relies on Lemma \ref{conl}, and there is currently no suitable parabolic version of this lemma. As a result, our proof of the main theorem can not be extended to the time-dependent case.  Thus it remains to be seen that the numerical evidence mentioned earlier can be verified analytically.
	\begin{proposition}\label{pro1} Let (H1)-(H3) hold and $(m, p)$ be a weak solution to \eqref{e1}-\eqref{e4}.
		Assume that $N$=2. Then $p\in L^\infty(0,T; C_{\mbox{loc}}(\Omega))$.
	\end{proposition}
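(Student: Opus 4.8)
The plan is to follow the decomposition strategy announced in the introduction: on a small ball $B_{\delta_0}(y_0)\Subset\Omega$ write the elliptic equation \eqref{e1} as $-\Delta p = \mbox{div}\big[(m\otimes m)\nabla p\big] + S(x)$, and split $p = v + w$ on $B_{\delta_0}(y_0)$, where $v$ solves $-\Delta v = S(x)$ in $B_{\delta_0}(y_0)$ with $v = p$ on $\partial B_{\delta_0}(y_0)$ (more precisely one takes $v$ to absorb the $S$ term and, if convenient, the harmonic boundary data), and $w$ is the remaining piece carrying the divergence-form term $\mbox{div}[(m\otimes m)\nabla p]$. Since $S\in L^{q_0}$ with $q_0 > N/2 = 1$, standard Calderón–Zygmund and Sobolev embedding give $v\in C^{0,\alpha}_{\mbox{loc}}$, so $v$ is harmless. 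The heart of the matter is showing $w$ is continuous.

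First I would establish that $w$ is \emph{bounded} on a slightly smaller ball. This is where the cited result from \cite{LX} enters: the term $m\cdot\nabla p\in L^\infty(0,T;L^2(\Omega))$ from (D1), together with $m\in L^\infty(0,T;W^{1,2}_0)$ and the two-dimensional Sobolev/Moser machinery, should yield an $L^\infty_{\mbox{loc}}$ bound on the part of $p$ driven by $\mbox{div}[(m\otimes m)\nabla p] = \mbox{div}[m\,(m\cdot\nabla p)]$, because in $N=2$ the vector field $m\,(m\cdot\nabla p)$ has just enough integrability (an $L^{2+\ep}$-type gain coming from $m\in L^\infty_tW^{1,2}_x\hookrightarrow L^\infty_tL^q_x$ for all $q<\infty$ and $m\cdot\nabla p\in L^\infty_tL^2_x$) to put its divergence in a space whose Newtonian potential is bounded. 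I would quote or re-derive the precise statement from \cite{LX} that gives $w\in L^\infty_{\mbox{loc}}$.

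Next I would show the remaining piece is \emph{weakly monotone} in the sense of \cite{M}. After subtracting the bounded part $w$ and the Hölder part $v$, the leftover function $\tilde p$ satisfies an equation of the form $-\mbox{div}[(I+m\otimes m)\nabla \tilde p] = (\text{lower-order / already-controlled terms})$, i.e. it is (up to controlled data) a solution of the homogeneous degenerate elliptic equation with coefficient matrix $I+m\otimes m$, which is symmetric, elliptic from below by $I$, and measurable. Such solutions satisfy the maximum principle on subdomains, hence are weakly monotone: $\mbox{osc}_{B_r}\tilde p \le \mbox{osc}_{\partial B_r}\tilde p$. Crucially, $\tilde p\in W^{1,2}_{\mbox{loc}} = W^{1,N}_{\mbox{loc}}$ since $N=2$, so Manfredi's theorem applies and $\tilde p$ is locally continuous. (The one delicate point, flagged in the introduction, is that the degeneracy is one-sided — $\lambda_l \not\le c\,\lambda_s$ — so one cannot invoke \cite{HKM}; but weak monotonicity only needs the comparison/maximum principle, which survives because the matrix is still uniformly elliptic \emph{from below} and the equation is linear in $\nabla\tilde p$, so I expect the Manfredi argument to go through verbatim in dimension two.) Combining $p = v + w + \tilde p$ with $v$ Hölder, $w$ bounded, $\tilde p$ continuous then gives $p\in L^\infty(0,T;C_{\mbox{loc}}(\Omega))$ once one checks the estimates are uniform in $t$, which they are since all the inputs — the $L^\infty_t$ bounds in (D1) — are uniform in time.

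The main obstacle I anticipate is not any single one of these steps in isolation but making the decomposition $p = v + w + \tilde p$ actually \emph{legitimate} and the estimates \emph{uniform in $t$} simultaneously: one must choose the radius $\delta_0$ and the splitting so that (i) the $L^\infty$ bound on $w$ from \cite{LX} is available with constants depending only on the $L^\infty_t$-norms in (D1) and on $\mbox{dist}(\cdot,\partial\Omega)$, and (ii) the function $\tilde p$ genuinely solves a homogeneous (or controllably inhomogeneous) equation to which weak monotonicity applies — in particular the right-hand side error terms generated by subtracting $v$ and $w$ must lie in a space ($W^{-1,2}$ coming from an $L^2$ field, say) that does not destroy membership of $\tilde p$ in $W^{1,2}_{\mbox{loc}}$. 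Getting the bookkeeping of these two requirements to be consistent is the technical crux; once it is, Lemmas already recalled (Manfredi's continuity of weakly monotone $W^{1,N}$ functions, and the boundedness result of \cite{LX}) do the rest.
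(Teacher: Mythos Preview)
Your decomposition is not the one the paper uses, and as written it does not close. The paper keeps the \emph{full} operator $I+m\otimes m$ in both pieces: on each ball $B_r(y)$ one sets $p=p_0+p_1$ where $p_1$ solves $-\mbox{div}[(I+m\otimes m)\nabla p_1]=S$ with zero boundary data, and $p_0$ solves the \emph{homogeneous} equation $-\mbox{div}[(I+m\otimes m)\nabla p_0]=0$ with $p_0=p$ on $\partial B_r(y)$. This is essential for two reasons. First, weak monotonicity (the maximum principle on subdomains) requires $p_0$ to solve the homogeneous equation for the full operator; your piece $w$, which satisfies $-\Delta w=\mbox{div}[(m\otimes m)\nabla p]$, has a nontrivial right-hand side and is \emph{not} weakly monotone, and your three-piece split $p=v+w+\tilde p$ is ill-defined (if $p=v+w$ already, then $\tilde p\equiv 0$). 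Second, the result quoted from \cite{LX} is a bound for $p_1$ solving the \emph{full} degenerate equation with source $S\in L^{q_0}$ and zero Dirichlet data, and crucially it gives $\sup_{B_r}|p_1|\le c\,r^{2-2/q_0}$, a bound that \emph{vanishes} as $r\to 0$; mere boundedness of a piece (as you propose for $w$) contributes nothing to continuity of the sum. Your alternative route to boundedness of $w$ also fails quantitatively: from (D1) one only gets $m(m\cdot\nabla p)\in L^s$ for every $s<2$, not $L^{2+\varepsilon}$, so De Giorgi/Stampacchia for $-\Delta w=\mbox{div}\,F$ does not yield $w\in L^\infty$ in two dimensions.

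The paper then does \emph{not} invoke Manfredi's theorem as a black box. Because the decomposition depends on the radius, one instead reproduces the two-dimensional argument: weak monotonicity gives $\mbox{osc}_{B_r}p_0=\mbox{osc}_{\partial B_r}p_0$, Gehring's Morrey inequality on circles gives $\mbox{osc}_{\partial B_r}p_0\le c\big(r\int_{\partial B_r}|\nabla p_0|^2\,ds\big)^{1/2}$, and combining with $\mbox{osc}_{B_r}p_1\le c\,r^{2-2/q_0}$ yields $\omega_r(y)^2\le c\,r\int_{\partial B_r}|\nabla p_0|^2\,ds+c\,r^{4-4/q_0}$. Dividing by $r$ and integrating in $r$ from $r$ to $R$ turns the surface integrals into a solid integral bounded by $\int_{B_R}(|\nabla p|^2+|m\cdot\nabla p|^2)$, and since $\omega_r$ is monotone one gets $\omega_r(y)^2\ln(R/r)\le c$, hence $\omega_r(y)\to 0$ uniformly in $t$. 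The missing idea in your proposal is precisely this pairing: a homogeneous-equation piece (for the full operator) that is weakly monotone, together with a source piece whose $L^\infty$ norm on $B_r$ is $O(r^{2-2/q_0})$, fed into the logarithmic oscillation estimate.
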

	\begin{proof}
		Fix a point $y$ in $\Omega$. Note that $m\in C([0,T]; \left(L^2(\Omega)\right)^N)$. For each $t\in (0, T]$ and each $r\in (0, R]$, where $R=\mbox{dist}(y,\partial\Omega)$, 
		%with $B_r(y)\subset\Omega$, 
		we consider the boundary value problem
		\begin{align}
		-\mbox{{div}}\left[(I+m(x,t)\otimes m(x,t))\nabla p_1\right]&=S(x)\ \ \ \mbox{in $B_r(y)$,}\label{do1}\\
		p_1&=0\ \ \ \mbox{on $\partial B_r(y)$.}\label{do2}
		\end{align}
		Even though the elliptic coefficients in \eqref{do1} may not be bounded above, we can easily infer from the proof of Lemma 2.3 in \cite{X4} that 
		this problem has a unique solution $p_1=p_1(x,t)$ in the following sense:
		\begin{enumerate}
			\item[(D4)]  $p_1\in W_0^{1,2}(B_r(y)),\ m\cdot\nabla p_1\in L^2(B_r(y))$;
			\item[(D5)] for each $\xi \in W_0^{1,2}(B_r(y))$ with $m\cdot\nabla \xi\in L^2(B_r(y))$ one has
			\begin{equation}
			\int_{B_r(y)}\left(\nabla p_1\nabla\xi+(m\cdot\nabla p_1)(m\cdot\nabla \xi)\right)dx=\int_{B_r(y)}S(x)\xi dx.
			\end{equation}
		\end{enumerate} Furthermore, we are in a position to assert from (H1) and Proposition 2.1 in \cite{LX} that there is a positive  number $c=c(N, q_0)$ such that
		\begin{equation}
		\sup_{B_r(y)}|p_1|\leq cr^{2-\frac{N}{q_0}}\left(\int_{B_r(y)}|S(x)|^{q_0}dx\right)^{\frac{1}{q_0}}.
		\end{equation}
		Here and in what follows $\sup$ (resp. $\inf$) means $\mbox{ess sup}$ (resp. $\mbox{ess inf}$). Consequently, 
		$p_0\equiv p-p_1$ 
		is the unique solution of 
		the boundary value problem
		\begin{align}
		-\mbox{{div}}\left[(I+m\otimes m)\nabla p_0\right]&=0\ \ \ \mbox{in $B_r(y)$,}\label{dz1}\\
		p_0(x,t)&=p(x,t)\ \ \ \mbox{on $\partial B_r(y)$}\label{dz2}
		\end{align}
		in the sense of (D4)-(D5) with an obvious modification to the boundary condition. That is,  we can decompose $p(x,t)$ into the sum of $p_0(x,t)$ and $p_1(x,t)$ on $B_r(y)$, or equivalently,
		\begin{equation}
		p=p_0+p_1\ \ \ \mbox{on $B_r(y)$}.
		\end{equation} Set $k_l=\sup_{\partial B_r(y)}p$. Then $(p_0-k_l)^+\in W_0^{1,2}(B_r(y))$ with $m\cdot\nabla(p_0-k_l)^+ \in L^2(B_r(y))$. Thus we can use it as a test function in \eqref{dz1}, thereby obtaining
		$$p_0\leq k_l \ \ \mbox{a.e. on $B_r(y)$.}$$
		In fact, we can further conclude that
		%It is not difficult to derive from \eqref{dz1} that 
		$p_0$ is a weakly monotone function \cite{M}, i.e.,
		\begin{equation}
		\sup_{\partial \Omega^\prime}p_0=\sup_{ \Omega^\prime}p_0\ \ \ \mbox{and }\ \ \ \inf_{\partial \Omega^\prime}p_0=\inf_{ \Omega^\prime}p_0
		\end{equation}
		for each sub-domain $\Omega^\prime$ of $B_r(y)$. By Morrey's inequality on spheres as formulated by Gehring \cite{G1}, we obtain
		\begin{equation}
		\mbox{osc}_{ B_r(y)}p_0\equiv\sup_{ B_r(y)}p_0-\inf_{ B_r(y)}p_0=\sup_{ \partial B_r(y)}p_0-\inf_{\partial B_r(y)}p_0\leq c\left(r\int_{\partial B_r(y)}|\nabla p_0|^2ds\right)^{\frac{1}{2}}.
		\end{equation}
		%where $c$ is a positive number. 
		Of course, the above inequality can also be established via an elementary calculus argument. Keeping the preceding estimates in mind,  we compute
		\begin{equation}
		\begin{split}
		\omega_r(y)\equiv &\ \mbox{osc}_{ B_r(y)}p\\
		%=\sup_{ B_r(y)}p-\inf_{ B_r(y)}p\\
		=&\sup_{x_1, x_2\in B_r(y)}(p(x_1,t)-p(x_2,t))\\
		=&\sup_{x_1, x_2\in B_r(y)}(p_0(x_1,t)-p_0(x_2,t)+p_1(x_1,t)-p_1(x_2,t))\\
		\leq &\ \mbox{osc}_{ B_r(y)}p_0+\mbox{osc}_{ B_r(y)}p_1\\
		\leq &  c\left(r\int_{\partial B_r(y)}|\nabla p_0|^2ds\right)^{\frac{1}{2}}+cr^{2-\frac{2}{q_0}}.
		\end{split}
		\end{equation}
		Remember that $q_0>1$. 
		Square both sides of the above inequality, divide through the resulting inequality by $r$, and integrate to
		obtain
		\begin{equation}\label{}
		\begin{split}
		\int_{r}^{R}\frac{\omega^2_s(y)}{s}ds\leq & c\int_{r}^{R}\int_{\partial B_s(y)}|\nabla p_0|^2dsds+c\left(R^{4-\frac{4}{q_0}}-r^{4-\frac{4}{q_0}}\right)\\
		\leq & c\int_{ B_R(y)}|\nabla p_0|^2dx+cR^{4-\frac{4}{q_0}}.
		\end{split}
		\end{equation}
		%where $R=\mbox{dist}(y,\partial\Omega)$. 
		%The last step above is due to . 
		Note that $\omega_r(y)$ is a decreasing function of $r$. We deduce from
		the preceding inequality that
		\begin{equation}\label{osc1}
		\omega^2_r(y)\ln\frac{R}{r}\leq c\int_{ B_R(y)}|\nabla p_0|^2dx+cR^{4-\frac{4}{q_0}}.
		\end{equation}
		Obviously, $p_0-p$ is a legitimate test function for \eqref{dz1}. Upon using it, we derive
		\begin{equation}
		\int_{ B_r(y)}|\nabla p_0|^2dx +\int_{ B_r(y)}|m\cdot\nabla p_0|^2dx\leq \int_{ B_r(y)}|\nabla p|^2dx+\int_{ B_r(y)}|m\cdot\nabla p|^2dx.
		\end{equation}
		This combined with \eqref{osc1}
		% and the fact that 
		yields
		\begin{equation}
		\omega_r(y)\leq \frac{c}{\sqrt{\ln\frac{R}{r}}}\equiv \sigma(r)\rightarrow 0\ \ \mbox{as $r\rightarrow 0$}.
		\end{equation}
		This finishes the proof.
	\end{proof}
	%Obviously, we also have 
	%while $p_1(x,t)$ solves the problem
	The continuity of $p$ in the space variables enables us to derive a local version of Proposition 2.1 in \cite{LX}. As we shall see, the key difference is that here $\beta$ does not have to be small. In fact, we shall establish that $|m|^\beta\in L^2(0,T; W^{1,2}_{\mbox{loc}}(\Omega)),\ |m|^\beta|\nabla p|^2\in L^1(0,T;L^1_{\mbox{loc}}(\Omega)) $ for each $\beta>0$.
	
	\begin{proposition}\label{pro2} Let the assumptions of Proposition \ref{pro1} hold.
		Fix  $K>0$ and define
		\begin{equation}
		v=(|m|^2-K^2)^++K^2.\label{vd}
		\end{equation}
		Then for each $\beta>0$ and  each $y\in\Omega$ there hold
		%exists $r_0\in (0, \mbox{dist}(y, \partial\Omega))$ such that
		\begin{equation}
		v^\beta\in L^2(0,T; W^{1,2}(B_{\frac{1}{2}r}(y))),\ \ \  v^\beta|\nabla p|^2\in L^1(B_{\frac{1}{2}r}(y)\times(0,T))
		%\ \ \mbox{for $r\in (0,r_0)$},
		\end{equation}	
		for some $r\in (0, \mbox{dist}(y, \partial\Omega))$ determined by the given data.
		%where 
		%
	\end{proposition}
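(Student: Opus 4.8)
The plan is to run the scheme behind Proposition 2.1 of \cite{LX}, but to use the interior continuity of $p$ established in Proposition \ref{pro1} in order to absorb the cubic term, which is precisely what allows $\beta$ to be arbitrary rather than small. First I would reduce to the case $\beta\geq1$: since $v\geq K^2>0$, for $0<\beta<1$ one has $v^\beta\leq K^{2(\beta-1)}v$ and $|\nabla(v^\beta)|\leq\beta K^{2(\beta-1)}|\nabla v|$ a.e., so that range follows immediately from the case $\beta=1$. Thus assume $\beta\geq1$, put $\alpha=2\beta-1\geq1$, fix $y\in\Omega$, and let $\zeta\in C_c^\infty(B_r(y))$ with $\zeta\equiv1$ on $B_{r/2}(y)$, where $r<\operatorname{dist}(y,\partial\Omega)$ is to be chosen small only at the end. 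A fact used repeatedly: because $N=2$, (D1) gives $m\in L^\infty(0,T;L^q(\Omega))$ for every $q<\infty$, so every ``constant'' quantity that occurs --- for instance $\iint_{\Omega_T}v^{s}(\zeta^2+|\nabla\zeta|^2)$, $\iint_{\Omega_T}|S|\,v^{s}\zeta^2$, and $\int_\Omega|m_0|^{2(\alpha+1)}$ --- is finite; I write $C_0$ for a generic such number. Every test function below is made admissible by the usual truncation $v\mapsto\min(v,n)$, uniform-in-$n$ bounds, and passage to the limit, which I will not belabor.

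The first computation is to test \eqref{e2} with $v^\alpha m\,\zeta^2$. Writing $w=|m|^2$ and using $\nabla v=\mathbf 1_{\{w>K^2\}}\nabla w$, $|\nabla v|\leq2|m||\nabla m|$ and $|m|\leq v^{1/2}$, integration by parts produces: a nonnegative term $\tfrac12\int_\Omega\Phi(|m(\cdot,T)|^2)\zeta^2$ (where $\Phi'=v(\cdot)^\alpha$); a nonnegative term $\tfrac{D^2\alpha}{2}\iint_{\Omega_T}v^{\alpha-1}|\nabla v|^2\zeta^2$; a nonnegative term $\iint_{\Omega_T}|m|^{2\gamma}v^\alpha\zeta^2$; a cutoff error $D^2\iint_{\Omega_T}v^\alpha\zeta\,\nabla w\cdot\nabla\zeta$ which Young's inequality bounds by $\tfrac{D^2}{2}G+C_0$; a finite initial term $\tfrac12\int_\Omega\Phi(|m_0|^2)\zeta^2\leq C_0$ (finite since $\Phi(|m_0|^2)\leq c(|m_0|^{2(\alpha+1)}+1)$ and $m_0\in W^{1,2}(\Omega)\hookrightarrow L^{2(\alpha+1)}(\Omega)$ in two dimensions); and, with the unfavorable sign, the cubic term. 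Discarding all the good terms leaves
\[
\tfrac{D^2}{2}\,G\ \leq\ E^2\,B+C_0,\qquad
G:=\iint_{\Omega_T}v^\alpha|\nabla m|^2\zeta^2,\quad
B:=\iint_{\Omega_T}(m\cdot\nabla p)^2v^\alpha\zeta^2 .
\]
The second computation is to test \eqref{e1}, for a.e.\ $t$, with $(p-p(y,t))v^\alpha\zeta^2$, then integrate in $t$; the left side is exactly $P+B$ with $P:=\iint_{\Omega_T}|\nabla p|^2v^\alpha\zeta^2$. This is where Proposition \ref{pro1} is used: $|p(x,t)-p(y,t)|\leq\sigma(r)$ for a.e.\ $(x,t)$ with $x\in B_r(y)$, where $\sigma(r)\to0$ as $r\to0$ and $\sigma$ is independent of $t$ (the modulus in Proposition \ref{pro1} is governed by $\|p\|_{L^\infty(0,T;W^{1,2})}$ and $\|m\cdot\nabla p\|_{L^\infty(0,T;L^2)}$). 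Expanding $\nabla(v^\alpha\zeta^2)$ and using $|\nabla v|\leq 2v^{1/2}|\nabla m|$, $|m|\leq v^{1/2}$, $v^{\alpha-1}\leq v^\alpha/K^2$ and Young's inequality, I would reach
\[
P+B\ \leq\ C_0+\sigma(r)\Big[\big(\tfrac{\alpha}{K^2}+1\big)P+(\alpha+1)B+2\alpha\,G\Big].
\]

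The last step is absorption. I would first choose $r$ small enough that $\sigma(r)(\tfrac{\alpha}{K^2}+1)\leq\tfrac14$ and $\sigma(r)(\alpha+1)\leq\tfrac14$, which turns the last display into $\tfrac34(P+B)\leq C_0+2\alpha\sigma(r)G$, and then insert $G\leq\tfrac{2}{D^2}(E^2B+C_0)$ from the first computation and shrink $r$ further so that $\tfrac{4\alpha E^2}{D^2}\sigma(r)\leq\tfrac14$. This gives $\tfrac34P+\tfrac12B\leq C_0$, hence $P,B<\infty$, and then $G<\infty$. Since $|\nabla v|^2\leq4v|\nabla m|^2$, it follows that $\iint_{\Omega_T}v^{\alpha-1}|\nabla v|^2\zeta^2\leq4G<\infty$, i.e.\ $\iint_{\Omega_T}|\nabla(v^\beta)|^2\zeta^2<\infty$ because $\alpha=2\beta-1$; together with the fact that $v^\beta$ is bounded by a constant multiple of $|m|^{2\beta}+1\in L^\infty(0,T;L^2(\Omega))$, this gives $v^\beta\in L^2(0,T;W^{1,2}(B_{r/2}(y)))$. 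Finally, $v^\beta\leq K^{2(1-\beta)}v^{2\beta-1}=K^{2(1-\beta)}v^\alpha$ for $\beta\geq1$, so $\iint_{\Omega_T}v^\beta|\nabla p|^2\zeta^2\leq K^{2(1-\beta)}P<\infty$, i.e.\ $v^\beta|\nabla p|^2\in L^1(B_{r/2}(y)\times(0,T))$.

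The step I expect to be the real obstacle is this last absorption, and for it to close three features must be arranged together: the two test functions must carry the \emph{same} weight $v^\alpha$, so that the cubic term produced by \eqref{e2} is exactly the quantity $B$ that appears on the good side of \eqref{e1}; the smallness factor $\sigma(r)$ coming from Proposition \ref{pro1} must be available to defeat the $\beta$-dependent constants $\tfrac{\alpha}{K^2},\ \alpha,\ \tfrac{4\alpha E^2}{D^2}$; and the elementary inequality $v^{\alpha-1}\leq v^\alpha/K^2$, forced by $v\geq K^2>0$, must be used to turn $|\nabla p|^2v^{\alpha-1}$ into the good term $|\nabla p|^2v^\alpha$. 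A secondary, purely technical point is the admissibility of the test functions, which I would handle by the truncation just mentioned together with the embedding $W^{1,2}\hookrightarrow L^q$, $q<\infty$, valid because $N=2$.
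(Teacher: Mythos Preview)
Your proposal is correct and follows essentially the same approach as the paper: test \eqref{e2} with $(\text{power of }v)\,m\,\zeta^2$, test \eqref{e1} with the same power of $v$ times $(p-\text{const})\,\zeta^2$, and use the smallness of $\sigma(r)$ from Proposition~\ref{pro1} to absorb the cross terms; the truncation you invoke is exactly the paper's $v_L=\theta_L(|m|^2)$. The only cosmetic differences are that the paper works directly with the exponent $\beta$ on the weight (your $\alpha=2\beta-1$ is just a reparametrization), subtracts the ball average $p_{y,r}(t)$ rather than the pointwise value $p(y,t)$ (which sidesteps the measurability of $t\mapsto p(y,t)$), and combines the two estimates by adding rather than substituting---none of this changes the argument.
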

	\begin{proof}
		%The title of your first subsection in section 2
		Let $K>0, \beta>0 $ be given and $v$ be defined as in \eqref{vd}. For $L>K$, define
		\begin{equation}\label{cutoff}
		\theta_L(s)=\left\{\begin{array}{ll}
		L^2&\mbox{if $s\geq L^2$,}\\
		s&\mbox{if $K^2<s< L^2$,}\\
		K^2 &\mbox{if $s\leq K$.}
		\end{array}\right.
		\end{equation}
		Set $v_L=\theta_L(|m|^2)$.
		%Without loss of generality, we may assume that $m\in L^\infty(\Omega_T)$. (Otherwise, we use a suitable %cut-off function for $|m|^2$.)
		%As we shall see, our approximate solutions are bounded.
		Fix $y\in\Omega$, pick  $r\in (0, \mbox{dist}(y, \partial\Omega))$, and select a smooth cutoff function $\xi: \mathbb{R}^N\rightarrow \mathbb{R}$
		satisfying 
		%C^\infty$ function $\xi$ on $\mathbb{R}^{N+1}$ with $$
		%and
		\begin{eqnarray*}
			0&\leq&\xi\leq 1,\\
			\xi&=& 1\ \ \ \mbox{on $B_{\frac{1}{2}r}(y)$},\\
			\xi&=& 0\ \ \ \mbox{off $B_{r}(y)$},\\	
			%	|	\partial_t\xi|&\leq&\frac{c}{r^2},\\
			|	\nabla\xi|&\leq&\frac{c}{r}.
		\end{eqnarray*}
		Then the function $v_L^\beta m\xi^2$ is a legitimate test function for \eqref{e2}. Upon using it, we arrive at
		\begin{eqnarray}
		\lefteqn{\frac{1}{2}\frac{d}{dt}\int_{B_r(y)} \int_{0}^{|m|^2}[\theta_L(s)]^\beta ds\xi^2 \ dx+D^2\int_{B_r(y)} v_L^\beta|\nabla m|^2\xi^2\ dx}\nonumber\\
		&&+\frac{D^2\beta}{2}\int_{B_r(y)} v_L^{\beta-1}|\nabla v_L|^2\xi^2\ dx+\int_{B_r(y)}|m|^{2\gamma}v_L^\beta\xi^2\ dx\nonumber\\
		&=&-2D^2\int_{B_r(y)} v_L^\beta\nabla m m\nabla\xi\xi\ dx+E^2\int_{B_r(y)} v_L^\beta\mnp^2\xi^2\ dx\nonumber\\
		&\leq& \varepsilon D^2\int_{B_r(y)} v_L^\beta|\nabla m|^2\xi^2\ dx+c(\varepsilon)\int_{B_r(y)} v_L^\beta| m|^2|\nabla\xi|^2\ dx\nonumber\\
		&&+E^2\int_{B_r(y)} v_L^\beta\mnp^2\xi^2\ dx,\label{n1}
		\end{eqnarray}
		where $\varepsilon>0$.
		In the derivation of the third term above,  we have used the fact that
		\begin{equation}\label{rs11}
		\nabla v_L=0\ \ \mbox{on the set where $|m|^2>L^2$ or $|m|^2<K^2$}.
		\end{equation}
		Also observe that $\nabla m=\nabla\otimes m$, and we still have
		%\begin{equation}
		$\nabla\left(\frac{1}{2}|m|^2\right)=\nabla m m$.
		%\end{equation}
		Set
		%\begin{equation}
		$\prt=\avint_{B_r(y)} p(x,t)dx$.
		%=\frac{1}{|B_r(y)|}\ib p(x,t)dx.
		%\end{equation}
		Note that $m\otimes m\nabla p=\mnp m$. Keep this in mind and use $v_L^\beta (p-\prt)\xi^2$ as a test function in \eqref{e1} to deduce
		\begin{eqnarray}
		\lefteqn{\int_{B_r(y)} v_L^\beta|\nabla p|^2 \xi^2\ dx +\int_{B_r(y)} v_L^\beta\mnp^2 \xi^2\ dx }\nonumber\\
		&=&-\int_{B_r(y)} \nabla p (p-\prt)\beta v_L^{\beta-1}\nabla v_L  \xi^2\ dx -\int_{B_r(y)} \nabla p (p-\prt) v_L^{\beta}2\nabla \xi  \xi\ dx\nonumber \\
		&&-\int_{B_r(y)} \mnp m(p-\prt) v_L^{\beta}2\nabla \xi  \xi\ dx+\int_{B_r(y)} S(x)v_L^\beta (p-\prt)  \xi^2\ dx \nonumber\\
		&&-\int_{B_r(y)} \mnp m(p-\prt)\beta v_L^{\beta-1}\nabla v_L  \xi^2\ dx \\
		&\leq & \varepsilon \int_{B_r(y)} v_L^{\beta}|\nabla p|^2  \xi^2\ dx+ c(\varepsilon)\beta^2\int_{B_r(y)} v_L^{\beta-2}(p-\prt)^2|\nabla v_L|^2 \xi^2\ dx\nonumber\\
		&& + c(\varepsilon)\int_{B_r(y)} v_L^{\beta}(p-\prt)^2|\nabla \xi|^2\ dx+\varepsilon\int_{B_r(y)} v_L^\beta\mnp^2 \xi^2\ dx \nonumber\\
		&& +c(\varepsilon)\beta^2\int_{B_r(y)} v_L^{\beta-2}|m|^2(p-\prt)^2|\nabla v_L|^2 \xi^2\ dx \nonumber\\
		&&+c(\varepsilon)\int_{B_r(y)} v_L^{\beta}|m|^2(p-\prt)^2|\nabla \xi|^2\ dx+\int_{B_r(y)} S(x)v_L^\beta (p -\prt) \xi^2\ dx ,\nonumber
		\end{eqnarray}
		where $\varepsilon>0$ is given as before.
		By virtue of \eqref{rs11}, we have that $v_L^{\beta-2}|m|^2|\nabla v_L|^2=v_L^{\beta-1}|\nabla v_L|^2$.
		Also, $v_L\geq K^2$ and $\max_{B_r(y)}(p-\prt)^2\leq \sigma^2(r)$. %Remember that $\beta\in (0,1)$. This gives $ v_L^{\beta-1}p^2\leq \|p\|_\infty^2K^{2(\beta-1)}$. 
		Choose $\varepsilon$ suitably small, multiply through the above inequality by $2E^2$, add the resulting inequality to 
		\eqref{n1}, and thereby obtain
		\begin{eqnarray}
		\lefteqn{\frac{d}{dt}\int_{B_r(y)} \int_{0}^{|m|^2}[\theta_L(s)]^\beta ds \xi^2\ dx +\int_{B_r(y)} v_L^\beta|\nabla m|^2 \xi^2\ dx }\nonumber\\
		&&+\beta\int_{B_r(y)} v_L^{\beta-1}|\nabla v_L|^2 \xi^2\ dx +\int_{B_r(y)}|m|^{2\gamma}v_L^\beta  \xi^2\ dx \nonumber\\
		&&+\int_{B_r(y)} v_L^\beta|\nabla p|^2 \xi^2\ dx +\int_{B_r(y)} v_L^\beta\mnp^2 \xi^2\ dx \nonumber\\
		&\leq&c\beta^2\frac{\sigma^2(r)}{K^2} \int_{B_r(y)}v_L^{\beta-1}|\nabla v_L|^2 \xi^2\ dx +c\beta^2\sigma^2(r)\int_{B_r(y)} v_L^{\beta-1}|\nabla v_L|^2 \xi^2\ dx \nonumber\\
		&& +c\int_{B_r(y)} v_L^{\beta}(p-\prt)^2|\nabla \xi|^2\ dx+c\int_{B_r(y)} v_L^{\beta}|m|^2(p-\prt)^2|\nabla \xi|^2\ dx\nonumber\\
		&&+c\int_{B_r(y)} v_L^{\beta}|m|^2|\nabla \xi|^2\ dx+\int_{B_r(y)} S(x)v_L^\beta (p-\prt)  \xi^2\ dx \nonumber\\
		&\leq&
		%c\beta^2\frac{\sigma^2(r)}{K^2} \int_{B_r(y)}v_L^{\beta-1}|\nabla p|^2 \xi^2\ dx +
		c\beta^2\sigma^2(r)\int_{B_r(y)} v_L^{\beta-1}|\nabla v_L|^2 \xi^2\ dx +c\int_{B_r(y)} v^{\beta+1}|\nabla \xi|^2\ dx \nonumber\\
		&&
		+c\int_{B_r(y)} |S(x)|v^\beta \xi^2\ dx+c.
		\end{eqnarray}
		%	Integrate this inequality with respect to t, 
		%	|m|^2\leq v,
		Here we have used the fact that $p\in L^\infty(\ot)$. This is due to Proposition 2.1 in \cite{LX}. In view of Proposition \ref{pro1}, $\lim_{r\rightarrow 0}\sigma(r)=0$. We can choose $r$ sufficiently small so that the first term on the right-hand in the above inequality can be absorbed into the similar term on the left-hand side there.  Integrating the resulting inequality with respect to $t$ and then taking $L\rightarrow \infty$ yield the desired result. The proof is complete.
	\end{proof}
	
	The core of our development is the following proposition, whose proof is inspired by an argument in \cite{CLMSN},
	based upon important contributions due to M\"{uller} \cite{MU}. Also see Proposition 2.1 in \cite{E}, which has become known as the div-curl lemma.
	
	\begin{proposition}\label{pro3}Let the assumptions of Proposition \ref{pro1} hold and $(m, p)$ be a stationary weak solution to \eqref{e1}-\eqref{e3}. Then
		$|\nabla p|^2,\ \mnp^2\in \mathcal{H}^1_{\mbox{loc}}(\Omega)$.
	\end{proposition}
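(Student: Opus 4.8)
The plan is to extract the div--curl (compensated compactness) structure hidden in \eqref{e1}. Introduce the flux $u=-(I+m\otimes m)\nabla p$; then \eqref{e1} reads $\mbox{div}\,u=S$, while pointwise
\begin{equation}\label{plid}
-\,u\cdot\nabla p=\langle (I+m\otimes m)\nabla p,\nabla p\rangle=|\nabla p|^2+\mnp^2\geq 0 .
\end{equation}
Since $|\nabla p|^2$ and $\mnp^2$ are nonnegative and their sum dominates each of them, the lemma of Section 2 characterizing $\mathcal{H}^1_{\mbox{loc}}$ membership of nonnegative functions through the $L\log L$ condition reduces the proposition to showing that $|\nabla p|^2+\mnp^2$, i.e. $-u\cdot\nabla p$, lies in $\mathcal{H}^1_{\mbox{loc}}(\Omega)$.

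The first, and crucial, step is that the flux is locally square integrable: $u\in L^2_{\mbox{loc}}(\Omega)$. Fix $y\in\Omega$ and take a ball $B$ centered at $y$ small enough for Proposition \ref{pro2} to apply (say with $\beta=1$ and $\beta=2$). Writing $(m\otimes m)\nabla p=\mnp m$ and using $|m|^2\leq v=(|m|^2-K^2)^++K^2$ gives $|(m\otimes m)\nabla p|^2\leq |m|^4|\nabla p|^2\leq v^2|\nabla p|^2$, and $v^2|\nabla p|^2\in L^1(B)$ by Proposition \ref{pro2} with $\beta=2$; together with $\nabla p\in L^2(\Omega)$ this yields $u\in L^2(B)$. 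It is essential here that Proposition \ref{pro2} is available for large $\beta$, since the notion of weak solution only supplies $\mnp\in L^2$, not $|m|^2|\nabla p|\in L^2$. Equivalently --- and this is the form matching the narrative of Section 1 --- Proposition \ref{pro2} places $|m|^\beta$, hence $(1+|m|^2)^\sigma$, in $W^{1,2}_{\mbox{loc}}(\Omega)\subset BMO_{\mbox{loc}}(\Omega)$ (here $N=2$ is used), while $(1+|m|^2)^{-\sigma}$ is bounded; so by Lemma \ref{jnl} the weight $w:=1+|m|^2$ belongs to $A_q$ for every $q>1$, and one records the balanced weighted bounds $\nabla p\in L^2(B;w\,dx)$ (from $w|\nabla p|^2\leq 2v|\nabla p|^2$, taking $K\geq 1$) and $u\in L^2(B;w^{-1}dx)$ (from $|u|^2w^{-1}\leq 2(|\nabla p|^2+\mnp^2)$).

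Next I remove the source term. Choosing $\zeta\in C_c^\infty(B)$ with $\zeta\equiv 1$ near $y$ and solving $-\Delta\Phi=\zeta S$ on $\mathbb{R}^2$, Calder\'on--Zygmund theory together with the two--dimensional Sobolev embedding gives $\nabla\Phi\in W^{1,q_0}_{\mbox{loc}}(B)\hookrightarrow L^{q_1}_{\mbox{loc}}(B)$ with $q_1>2$, because $q_0>1$ by (H1). Then $E:=u+\nabla\Phi$ satisfies $\mbox{div}\,E=(1-\zeta)S=0$ near $y$ and belongs to $L^2_{\mbox{loc}}$, while $\nabla p$ is curl free; the div--curl lemma (Proposition 2.1 of \cite{E}, in the Hardy--space form originating in \cite{MU} and \cite{CLMSN}), in its local $L^2$--$L^2$ version --- equivalently, in its $A_q$--weighted version applied to the pair $u\in L^2(B;w^{-1}dx)$, $\nabla p\in L^2(B;w\,dx)$ --- yields $E\cdot\nabla p\in\mathcal{H}^1_{\mbox{loc}}$. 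Finally $\nabla\Phi\cdot\nabla p\in L^s_{\mbox{loc}}$ with $\frac1s=\frac1{q_1}+\frac12<1$, hence $\nabla\Phi\cdot\nabla p\in\mathcal{H}^1_{\mbox{loc}}$ by the $L\log L$ lemma; subtracting, $u\cdot\nabla p=E\cdot\nabla p-\nabla\Phi\cdot\nabla p\in\mathcal{H}^1_{\mbox{loc}}(\Omega)$, and \eqref{plid} completes the proof.

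The main obstacle is securing the hypotheses of the div--curl lemma, above all the $L^2_{\mbox{loc}}$ bound on the \emph{full} flux $u$ (not merely on $\mnp$): this is exactly where Proposition \ref{pro2} with $\beta$ not small --- hence the local continuity of $p$ from Proposition \ref{pro1}, and the restriction $N=2$ --- is indispensable, and where the $A_q$--weight property of $1+|m|^2$, itself a consequence of the two--dimensional embedding $W^{1,2}\hookrightarrow BMO$ and Lemma \ref{jnl}, is the organizing principle. The remaining ingredients, namely localizing the div--curl lemma and disposing of the source term, are by now standard though locally somewhat technical.
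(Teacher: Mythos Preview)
Your argument is correct, and in fact it takes a shorter route than the paper's own proof. The paper explicitly remarks that the proposition ``would be a trivial consequence of Proposition~2.1 in \cite{E} if we had $\mnp m\in (L^2(\Omega))^2$,'' and then, rather than verifying this, proceeds to estimate the grand maximal function of $|\nabla p|^2+\mnp^2$ directly: it tests \eqref{e1} with $r^{-2}\phi^2\bigl(\frac{y-x}{r}\bigr)(p-p_{y,r})$, controls the dangerous term $r^{-2}\aiy (1+|m|^2)|p-p_{y,r}|^2\,dx$ by Poincar\'e, H\"older, and the $A_q$ property of $(1+|m|^2\xi)^\sigma$ (via Lemma~\ref{jnl}, exactly as you describe), and then integrates the resulting pointwise maximal bound using \eqref{md}. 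Your approach instead observes that Proposition~\ref{pro2} with $\beta=2$ already gives $|m|^4|\nabla p|^2\in L^1_{\mbox{loc}}$, hence the full flux $u=-(I+m\otimes m)\nabla p$ lies in $L^2_{\mbox{loc}}$; after peeling off the source by a Newtonian potential (so that $\mbox{div}\,E=0$ near $y$), the black-box div--curl lemma applies to the pair $(E,\nabla p)\in L^2_{\mbox{loc}}\times L^2_{\mbox{loc}}$, and the $L\log L$ characterization splits the nonnegative sum back into its two summands. Both proofs rest on the same substantive input --- Proposition~\ref{pro2} for large $\beta$, which in turn needs Proposition~\ref{pro1} and $N=2$ --- but yours packages the endgame more economically, while the paper's hands-on estimate makes the compensated compactness mechanism visible and shows precisely how the $A_q$ structure enters.

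One small comment: your parenthetical invocation of an ``$A_q$--weighted version'' of the div--curl lemma is unnecessary and a bit loose --- such weighted variants are not entirely standard, and you do not need one here, since you have already placed both $E$ and $\nabla p$ in unweighted $L^2_{\mbox{loc}}$. I would drop that aside, and likewise the weighted bounds $\nabla p\in L^2(w\,dx)$, $u\in L^2(w^{-1}dx)$, which play no role once $u\in L^2_{\mbox{loc}}$ is in hand. With that trimming the argument is clean and complete.
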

	%Before we begin to prove, 
	\begin{proof}
		%[Proof of Proposition \ref{pro3}]
		Note that this proposition would be a trivial consequence of Proposition 2.1 in \cite{E} if we had
		$\mnp m\in \left(L^2(\Omega)\right)^2$ due to \eqref{e1}. 
		Let $K_0\subset \Omega$ be compact. Pick $0<\varepsilon_0<\mbox{dist}(K_0, \partial\Omega)$. Fix $\phi\in\mathcal{T}$ with $\int_{\mathbb{R}^2}\phi dx>0$. For each $r\in (0, \varepsilon_0)$ and each $y\in K_0$ we use $\frac{1}{r^2}\phi^2(\frac{y-x}{r})(p-p_{y,r})$ as a test function in
		\eqref{e1} to obtain
		\begin{eqnarray}
		\lefteqn{\avint_{B_r(y)}|\nabla p|^2\phi^2(\frac{y-x}{r})dx+\avint_{B_r(y)}\mnp^2\phi^2(\frac{y-x}{r})dx}\nonumber\\
		&\leq&\frac{c}{r^2}\avint_{B_r(y)}|p-p_{y,r}|^2dx+\frac{c}{r^2}\avint_{B_r(y)}|m|^2|p-p_{y,r}|^2dx\nonumber\\
		&&+c\avint_{B_r(y)} |S(x)||p-p_{y,r}|dx\nonumber\\
		&\leq&\frac{c}{r^2}\avint_{B_r(y)}\left(1+|m|^2\right)|p-p_{y,r}|^2dx
		+c\avint_{B_r(y)} |S(x)||p-p_{y,r}|dx.\label{pe1}
		\end{eqnarray}
		%By virtue of Poincar\'{e}'s inequality (\cite{EG}, p. 141), we have
		%\begin{equation}
		%\frac{1}{r^2}\avint_{B_r(y)}|p-p_{y,r}|^2dx\leq c\left(\avint_{B_r(y)}|\nabla p|dx\right)^2\leq c\left[M(|\nabla %p|\chi_\Omega)(y)\right]^2,
		%\end{equation}
		Fix $1<s<q_0$, where $q_0$ is given as in (H1). We estimate
		\begin{eqnarray}
		\avint_{B_r(y)} |S(x)||p-p_{y,r}|dx&\leq& c\left(\avint_{B_r(y)} |S(x)|^sdx\right)^{\frac{1}{s}}\nonumber\\
		&\leq& c+\left(\avint_{B_r(y)} |S(x)|^sdx\right)^{\frac{q_0}{s}}\nonumber\\
		&\leq &c+\left[M(|S(x)|^s\chi_\Omega)(y)\right]^{\frac{q_0}{s}},
		\end{eqnarray}
		where $\chi_{\Omega}$ is the indicator function of $\Omega$ and $M(|S(x)|^s\chi_\Omega)(y)$ is the value of
		the maximal function $M(|S(x)|^s\chi_\Omega)$ at $y$.
		The fourth term in \eqref{pe1} is the most difficult one to handle. Fix $\sigma>1$. By Proposition \ref{pro2}, we have $|m|^{2\sigma} \in W^{1,2}_{\mbox{loc}}(\Omega)$. Pick a $C^\infty$ function $\xi$ on $\mathbb{R}^2$ satisfying 
		%It follows from the Sobolev extension theorem (\cite{EG}, p.135) that there is a function $u_m$ in $$W^{1,2}(\mathbb{R}^2)$ such that
		\begin{eqnarray*}
			\xi&=&1\ \ \mbox{on $\Omega_{\varepsilon_0}\equiv\{x\in\Omega:\mbox{dist}(x, \partial\Omega)\geq \varepsilon_0$}\},\\
			\xi&=& 0 \ \ \mbox{outside $\Omega_{\varepsilon_1}$ for some $\varepsilon_1\in (\varepsilon_0, \mbox{dist}(K_0, \partial\Omega))$}.
			%\|u_m\|_{W^{1,2}(\mathbb{R}^2)}&\leq& c\||m|^{2\sigma}\|_{W^{1,2}(\Omega_{\varepsilon_0})}.
		\end{eqnarray*} 
		Obviously, we have $|m|^{2\sigma}\xi\in W^{1,2}(\mathbb{R}^2)$. 
		%Since we are operating on $\Omega_{\varepsilon_0}$, we will understand $|m|^2\xi$ to be $|m|^2\xi$ in the subsequent calculations. 
		%Let $\alpha>0$ be given. 
		We can easily check that $|\nabla(|m|^2\xi+1)^\sigma|\in L^2(\mathbb{R}^2)$.
		%such that Extend $m$ to be zero outside $\Omega$ and the resulting function is still denoted by $m$. 
		%Obviously, we have $m\in  (W^{1,2}(\mathbb{R}^2))^2) $. 
		%Remember that the space dimension is $2$. We can derive from 
		An application of Poincar\'{e}'s inequality indicates that $(|m|^2\xi+1)^\sigma\in  VMO\subset  BMO$.  The reciprocal of the term $(|m|^2\xi+1)^\sigma$ is bounded and hence lies in $BMO$. By Lemma \ref{jnl}, we have $\left(|m|^2\xi+1\right)^\sigma\in A_{q}$ for each $q>1$. In particular, we take $q=\frac{2\sigma(s_1-1)}{s_1}$, where $s_1$ is a number
		from $\left(\frac{2\sigma}{2\sigma-1}, 2\right)$. Then there holds
		\begin{equation}\label{aqw}
		\sup_{B_r(y)}\left(\avint_{B_r(y)}(|m|^2\xi+1)^\sigma
		dx\left(\avint_{B_r(y)}(|m|^2\xi+1)^{-\frac{\sigma}{q-1}}
		dx\right)^{q-1}\right)\leq c.
		\end{equation}
		%can be estimated as follows:
		We estimate from Poincar\'{e}'s inequality that
		\begin{eqnarray}
		\lefteqn{\frac{1}{r^2}\avint_{B_r(y)}(|m|^2\xi+1)|p-p_{y,r}|^2dx}\nonumber\\
		&\leq&\frac{1}{r^2}\left(\avint_{B_r(y)}(|m|^2\xi+1)^\sigma
		dx\right)^{\frac{1}{\sigma}}\left(\avint_{B_r(y)}|p-p_{y,r}|^{\frac{2\sigma}{\sigma-1}}dx\right)^{\frac{\sigma-1}{\sigma}}\nonumber\\
		&\leq&c\left(\avint_{B_r(y)}(|m|^2\xi+1)^\sigma
		dx\right)^{\frac{1}{\sigma}}\left(\avint_{B_r(y)}|\nabla p|^{\frac{2\sigma}{2\sigma-1}}dx\right)^{\frac{2\sigma-1}{\sigma}}.\label{mt2}
		\end{eqnarray}
		Remember that $2>s_1>\frac{2\sigma}{2\sigma-1}$. Thus we have
		\begin{eqnarray}
		\lefteqn{\avint_{B_r(y)}|\nabla p|^{\frac{2\sigma}{2\sigma-1}}dx}\nonumber\\
		&=&\avint_{B_r(y)}|\nabla p|^{\frac{2\sigma}{2\sigma-1}}(|m|^2\xi+1)^{\frac{\sigma}{2\sigma-1}}(|m|^2\xi+1)^{-\frac{\sigma}{2\sigma-1}}dx\nonumber\\
		&\leq &\left(\avint_{B_r(y)}|\nabla p|^{s_1}(|m|^2\xi+1)^{\frac{s_1}{2}}\right)^{\frac{2\sigma}{s_1(2\sigma-1)}}\left(\avint_{B_r(y)}(|m|^2\xi+1)^{-\frac{\sigma s_1}{s_1(2\sigma-1)-2\sigma}}\right)^{1-\frac{2\sigma}{s_1(2\sigma-1)}}\nonumber\\
		&\leq &\left(M[|\nabla p|^{s_1}(|m|^2\xi+1)^{\frac{s_1}{2}}\chi_\Omega](y)\right)^{\frac{2\sigma}{s_1(2\sigma-1)}}\left(\avint_{B_r(y)}(|m|^2\xi+1)^{-\frac{\sigma }{q-1}}\right)^{\frac{q-1}{2\sigma-1}}.\label{mt3}
		\end{eqnarray}
		Use \eqref{mt3} in \eqref{mt2} and apply \eqref{aqw} to obtain
		\begin{eqnarray}
		\lefteqn{\frac{1}{r^2}\avint_{B_r(y)}(|m|^2\xi+1)|p-p_{y,r}|^2dx}\nonumber\\
		&\leq&\left(\avint_{B_r(y)}(|m|^2\xi+1)^\sigma
		dx\right)^{\frac{1}{\sigma}}\left(\avint_{B_r(y)}(|m|^2\xi+1)^{-\frac{\sigma }{q-1}}\right)^{\frac{q-1}{\sigma}}\nonumber\\
		&&\cdot\left(M[|\nabla p|^{s_1}(|m|^2\xi+1)^{\frac{s_1}{2}}\chi_\Omega](y)\right)^{\frac{2}{s_1}}\nonumber\\
		&\leq & c\left(M[|\nabla p|^{s_1}(|m|^2\xi+1)^{\frac{s_1}{2}}\chi_\Omega](y)\right)^{\frac{2}{s_1}}.
		\end{eqnarray}
		Collecting all the previous estimates in \eqref{pe1}, we arrive at
		\begin{eqnarray}
		\lefteqn{\sup_{0<r<\varepsilon_0}\left(\avint_{B_r(y)}|\nabla p|^2\phi^2(\frac{y-x}{r})dx+\avint_{B_r(y)}\mnp^2\phi^2(\frac{y-x}{r})dx\right)}\nonumber\\
		&\leq&c\left[M(|\nabla p|\chi_\Omega)(y)\right]^2+c\left(M[|\nabla p|^{s_1}(|m|^2\xi+1)^{\frac{s_1}{2}}\chi_\Omega](y)\right)^{\frac{2}{s_1}}\nonumber\\
		&&+\left[M(|S(x)|^s\chi_\Omega)(y)\right]]^{\frac{q_0}{s}}+c.
		\end{eqnarray}
		Integrate the above inequality over $K_0$ and keep in mind the inequality \eqref{md} and the fact that the exponents $\frac{2}{s_1},\ \frac{q_0}{s}$ on the right-hand side of the preceding inequality are both bigger than $1$ to derive
		\begin{eqnarray}
		\lefteqn{\int_{K_0}\sup_{0<r<\varepsilon_0}\left(\avint_{B_r(y)}|\nabla p|^2\phi^2(\frac{y-x}{r})dx+\avint_{B_r(y)}\mnp^2\phi^2(\frac{y-x}{r})dx\right)dy}\nonumber\\
		&\leq &c\int_{K_0}\left(\left[M(|\nabla p|\chi_\Omega)(y)\right]^2+\left(M[|\nabla p|^{s_1}(|m|^2\xi+1)^{\frac{s_1}{2}}\chi_\Omega](y)\right)^{\frac{2}{s_1}}\right)dy\nonumber\\
		&&+c\int_{K_0}\left[M(|S(x)|^s\chi_\Omega)(y)\right]^{\frac{q_0}{s}}dy+c\nonumber\\
		&\leq &c\int_{\mathbb{R}^2}\left(\left[M(|\nabla p|\chi_\Omega)(y)\right]^2+\left(M[|\nabla p|^{s_1}(|m|^2\xi+1)^{\frac{s_1}{2}}\chi_\Omega](y)\right)^{\frac{2}{s_1}}\right)dy\nonumber\\
		&&+c\int_{\mathbb{R}^2}\left[M(|S(x)|^s\chi_\Omega)(y)\right]^{\frac{q_0}{s}}dy+c\nonumber\\
		&\leq&c\int_{\Omega}|\nabla p|^2 dy+c\int_{\Omega}|\nabla p|^2(|m|^2\xi+1) dy+c\nonumber\\
		&\leq&c\int_{\Omega_{\varepsilon_1}}|\nabla p|^2|m|^2 dy+ c\leq c.\label{hse}
		\end{eqnarray}
		%Integrate the above inequality with respect to $t$ and note that the resulting integral on the right-hand side is bounded 
		The last step is due to Proposition \ref{pro2}. 
		%As observed in \cite{S1}, if we just use one $\phi$ with $\int \phi dx >0$ in the definition of the Hardy space, the resulting definition is equivalent to the original one. 
		This implies the desired result. The proof is complete.
	\end{proof}

	We are ready to prove the main theorem.
	\begin{proof}[Proof of the main theorem] If $m$ is time-independent, then \eqref{e2} becomes
		\begin{equation}\label{sta1}
		-D^2\Delta m=E^2\mnp\nabla p-|m|^{2(\gamma-1)}m \ \ \mbox{in $\Omega$.}
		\end{equation}
		We can easily deduce from Proposition \ref{pro3} that
		\begin{equation}
		\mnp\nabla p\in\mathcal{H}^1_{\mbox{loc}}(\Omega).
		\end{equation}	
		This together with Lemma \ref{conl} implies that $m$ is locally continuous. Subsequently,
		%Therefore, 
		%a suitable application of
		a result 
		%we can apply a local version of the theorem
		%a well-known result 
		in (\cite{R}, p. 82) becomes applicable, and upon using it, we arrive at
		%leads to 
		%\eqref{e1}, thereby obtaining
		\begin{equation}
		|\nabla p|\in L^q_{loc}(\Omega)\ \ \mbox{for each $q>1$.} 
		\end{equation}
		(More general results of this nature can be found in \cite{DI}.)
		It immediately follows from (H2) 
		%from whence follows 
		that the right-hand side of \eqref{sta1} 
		has the same integrability as $|\nabla p|$.
		%also lies in the preceding function space. 
		Consequently, we  can appeal to a local version of the Calderon-Zygmund inequality to derive
		\begin{equation*}
		m\in \left(W^{2,q}_{loc}(\Omega)\right)^N\ \ \mbox{for each $q>1$.} 
		\end{equation*}
		Thus $m\in \left(C^{1, \delta}_{\mbox{loc}}(\Omega)\right)^N$ for some $\delta\in (0,1)$. 
		%Remember that $p$ satisfies \eqref{e1}, and hence
		Now we are in a position to invoke the classical Schauder  estimates for \eqref{e1}. To be specific, we can derive from a local version of Theorem 6.13 in \cite{GT}
		%  , from whence follows 
		that $p\in C^{2,\delta}_{loc}(\Omega)$.
		% for some $\delta\in (0,1)$. 
		%Recall that , and thus
		It can easily be inferred from (H2)  that the last term in \eqref{sta1} is locally H\"{o}lder continuous. Use the  Schauder  estimates  for \eqref{sta1} to get $m\in \left(C^{2,\delta_0}_{loc}(\Omega)\right)^N$ for some $\delta_0\in (0,1)$.
		%This, in turn, says  $m$. 
		%The standard bootstrap argument
		%The desired result follows. 
		The proof is complete.
	\end{proof}

	\medskip
	% The data information below will be filled by AIMS editorial staff
	Received xxxx 20xx; revised xxxx 20xx.
	\medskip
	
\end{document}